\theoremstyle{plain}
\newtheorem{theorem}{Theorem}[section]
\newtheorem{lemma}{Lemma}[section]
\theoremstyle{definition}
\newtheorem{defin}{Definition}[section]
\newcommand{\mf}[1]{\displaystyle{\mathfrak{#1}}}
\newcommand{\comment}[1]{}
\DeclareMathOperator{\spec}{\ensuremath{Spec}}
\DeclareMathOperator{\Gr}{\ensuremath{gr}}
\DeclareMathOperator{\Sym}{\ensuremath{Sym}}
\begin{document}

\title{On the Azumaya locus of almost commutative algebras }
\author{Akaki Tikaradze}
\address{The University of Toledo, Department of Mathematics, Toledo, Ohio, USA}
\email{\tt atikara@utnet.utoledo.edu}
\date{\today}

\begin{abstract} We prove a general statement which implies  the 
coincidence of the Azumaya and smooth loci of the center of an algebra in 
positive characteristic,
provided that the spectrum of its associated graded algebra has a large symplectic
leaf. In particular, we show that for a symplectic
reflection algebra smooth and the Azumaya loci coincide.

\end{abstract}
\maketitle
\section{Introduction}

Throughout, we will fix a ground field $\bold{k},$ which will be assumed to
be algebraically closed with positive characteristic $p$. In studying 
the representation
theory of an associative $\bold{k}$-algebra $A$ with a large center (i.e. $A$
is a finitely generated module over its center), it is important
to understand the Azumaya locus of $A.$ Recall that the Azumaya
locus of $A$ is defined to be the subset of $\spec \bold{Z}(A)$ (where $\bold{Z}(A)$ 
denotes the center of $A$) consisting of all prime ideals $I\in \spec \bold{Z}(A)$
such that the localized algebra $A_I$ is an Azumaya algebra. If 
$A$ is a prime Noetherian ring with its center $\bold{Z}(A)$ being finitely generated
over $\bold{k}$, then a character $\chi:\bold{Z}(A)\to k$ belongs to the Azumaya
locus if and only if $A_{\chi}=A\otimes_{\bold{Z}(A)}\bold{k}$ affords an
irreducible representation whose
dimension is the largest possible dimension for an irreducible $A$-module,
and this largest possible dimension is the PI-degree of $A$ [\cite{BG} Proposition 3.1].
In particular, the square of PI-degree of $A$ is equal to the rank of $A$ over $\bold{Z}(A).$
 It is well-known that if the algebra $A$ is smooth, then the Azumaya locus
 is contained in the smooth locus of $\spec \bold{Z}(A)$ \cite{BG}. Thus, the natural
 question is when are these two open subsets of $\spec \bold{Z}(A)$ equal.
 In this direction, there is a general result (which will be crucial for us)
 due to Brown and Goodearl [\cite{BG} Theorem 3.8.], which states that if $A$ is a prime Noetherian
 ring which is Auslander-regular and Cohen-Macaulay, such that the complement
 of the Azumaya locus in $\spec \bold{Z}(A)$ has codimension at least 2, then
 the Azumaya and smooth loci coincide. Using this result, Brown and Goodearl \cite{BG} showed
 that the Azumaya locus coincides with the smooth locus when $A$ is either the universal
 enveloping algebra of a reductive Lie algebra in a very good characteristic,
 quantized algebra of functions, quantized enveloping algebras at roots of unity.
 Also, Brown and Changtong \cite{BC} have proved the similar result for rational
 Cherednik algebras \cite{BC}, and we have shown an analogous result for infinitesimal
 Hecke algebras of $\mf{sl}_2$ \cite{T}. 
 
  Given examples above, the natural question is what do the above
  examples have in common, and whether there is a general condition
  which will imply that the Azumaya and smooth loci coincide. 
  In this paper we give such a statement.  
  Namely, we
  will show that if the algebra $A$ can be equipped with a positive
  filtration such that the associated graded algebra has an open
  symplectic leaf whose complement  has codimension greater or equal 
  to 2, then the
  Azumaya locus of $A$ has the complement of codimension at least
  2. The combination of this with the above mentioned result of 
  Brown-Goodearl enables us to state a general result about the 
  coincidence of the Azumaya and smooth loci, in particular we will show
  that for a symplectic reflection algebra the Azumaya locus
  coincides with the smooth locus.
\section{General results} 
 
Let $A$ be an associative algebra over $\bold{k}$ equipped with a nonnegative filtration
$A_n\subset A_{n+1}\subset\cdots$, $n\in \mathbb{Z}_{+}$ such that the associated graded algebra $\Gr A$
is  commutative. Let $d$ be the largest number such that $[A_n, A_m]\subset A_{n+m-d}$
for all $n, m.$ Then $\Gr A$ becomes equipped with the following natural Poisson bracket:
for any homogeneous elements $a\in \Gr A_n, b\in \Gr A_m,$ one defines their Poisson bracket
$\{ a, b\}$ to be the symbol of $[a', b'] \in \Gr A_{n+m-d},$ where $a'\in A_n, b'\in A_m$ are arbitrary lifts of $a, b.$
Similarly, If $A$ is a flat associative algebra over $\bold{k}[t]$, 
such that $A/tA$ is commutative, then commutator bracket of $A$ induces a Poisson bracket on $A/tA$ in the standard way:
If d is the largest integer such that $A/tA$ is commutative, then for any $a, b\in A/tA,$ one
puts $\{ a, b\}=\frac{1}{t^d}[a', b'],$ where $a', b'$ are arbitrary lifts of $a, b$ in $A.$

 We start by recalling some terminology and a result
 by Bezrukavnikov and Kaledin \cite{BK}, which will play
 a crucial role in our proof.
 
 \begin{defin} \cite{BK} A central quantization of a (commutative) Poisson algebra $B$ is a flat associative
 $\bold{k}[[t]]$-algebra $B'$, such that $B=B'/tB'$ and Poisson bracket of B is indeced from the commutator  bracket of B, and the quotient map from the center of $B'$ to the Poisson center
 of $B$ is surjective.
 
 \end{defin}
 Here is the result that we are going to use.
 \begin{theorem}[\cite{BK} Proposition 1.24]\label{bez} If under the assumptions of the above definition, $\spec B$
 is a symplectic variety, then $B'[t^{-1}]$ is a simple algebra over its center.
 \end{theorem}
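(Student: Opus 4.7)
The plan is to establish simplicity of $B'[t^{-1}]$ over its center by showing that every nonzero two-sided ideal $I\subseteq B'[t^{-1}]$ meets $Z(B'[t^{-1}])$ nontrivially. I would first set $\tilde I := I \cap B'$ and observe that $B'[t^{-1}]/I$ is a module over the field $\bold{k}((t))$, so $B'/\tilde I$ is $t$-torsion free; this gives the crucial saturation property that whenever $tb\in\tilde I$ for some $b\in B'$, already $b\in\tilde I$. After rescaling by a suitable power of $t$ I may assume $\tilde I\not\subseteq tB'$, so that $J := (\tilde I+tB')/tB'\subseteq B$ is a nonzero ideal of the commutative algebra $B$.

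The key step is to promote $J$ to a \emph{Poisson} ideal of $B$. For $b\in B$ with lift $\beta\in B'$ and $j\in J$ with lift $\iota\in\tilde I$, the commutator $[\beta,\iota]$ lies in $\tilde I$ by two-sidedness and in $tB'$ because $B$ is commutative; writing $[\beta,\iota]=tc$ with $c\in B'$, the saturation property forces $c\in\tilde I$, so $\{b,j\}$, which is the class of $c$ modulo $t$, lies in $J$. Thus $J$ is a nonzero Poisson ideal.

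Next I would invoke the symplectic hypothesis, which severely restricts Poisson ideals. In characteristic zero the Hamiltonian vector fields span every tangent space, so $J=B$, and $t$-adic completeness of $B'$ then promotes $1\in\tilde I+tB'$ to $1\in\tilde I$, yielding $I=B'[t^{-1}]$. In the positive characteristic setting of the paper I would instead exploit Frobenius: picking any $0\neq f\in J$, the $p$-th power $f^p\in J$ is automatically Poisson central (since $\{f^p,-\}=pf^{p-1}\{f,-\}=0$) and nonzero because $\spec B$ smooth implies $B$ reduced. The surjectivity assumption built into the definition of a central quantization then produces $\hat f\in Z(B')$ with $\hat f\equiv f^p\pmod{tB'}$; combining with the saturation of $\tilde I$ to absorb the $t$-adic correction terms yields a nonzero element of $I\cap Z(B'[t^{-1}])$.

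I expect the main obstacle to be the Poisson-ideal step: dividing $[\beta,\iota]$ by $t$ while staying inside $\tilde I$ relies entirely on the saturation property together with the fact that commutators land in exactly $tB'$ rather than a higher power. The centrality hypothesis in the definition of a central quantization is not invoked until the final step, where it is precisely what allows one to upgrade a Frobenius-central element of $J$ to an honest central element of $B'[t^{-1}]$ lying inside $I$.
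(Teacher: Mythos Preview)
The paper does not prove this theorem at all: it is stated with attribution to \cite{BK}, Proposition~1.24, and then used as a black box in the proof of Theorem~\ref{muh}. So there is no in-paper argument for you to be compared against; what you have written is an independent attempt at the Bezrukavnikov--Kaledin result.

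Through the Poisson-ideal step your outline is fine, but the last move in characteristic $p$ has a real gap. From $\hat f\in Z(B')$ and $\iota\in\tilde I$ with $\hat f\equiv\iota\equiv f^{p}\pmod t$ you only get $\hat f-\iota=tb_{1}$ for some $b_{1}\in B'$. Saturation says $tb\in\tilde I\Rightarrow b\in\tilde I$; it does \emph{not} let you slide $\hat f$ into $\tilde I$, because you do not know $tb_{1}\in\tilde I$. To run a genuine $t$-adic induction you would need, at stage $n$, that the obstruction $\overline{b_{n}}\in B$ is Poisson central so it can be lifted to $Z(B')$ and subtracted; but the computation from centrality of $\hat f$ and two-sidedness of $\tilde I$ only yields $\{\overline{b_{n}},B\}\subset J$, not $\{\overline{b_{n}},B\}=0$. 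That is the missing idea, and it is not a formality. A secondary point: ``simple over its center'' as used later in the paper means every two-sided ideal is generated by its intersection with the center (equivalently $A/\mathfrak m A$ is simple for each maximal $\mathfrak m$ of $Z$); your target ``every nonzero ideal meets the center'' is strictly weaker when the center is not a field, so even a completed version of your argument would need a further reduction.

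For orientation, the proof in \cite{BK} is of a different flavor: it shows that a Frobenius-constant quantization of a symplectic variety is, locally on the Frobenius twist, isomorphic to a matrix algebra over its center (a twisted form of the algebra of differential operators), so Azumaya-ness, and hence simplicity over the center, is obtained structurally rather than by an ideal-theoretic lifting argument.
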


 The following result will be crucial.

\begin{theorem}\label{muh} Let $A$ be an associative algebra over $\bold{k}$ equipped with an 
increasing algebra filtration
$\bold{k}=A_0\subset A_1\subset\cdots,$ such that $\Gr A$ is a commutative 
finitely generated domain, and
the smooth locus of $X=\spec \Gr A, U\subset X$ 
is a symplectic variety with respect to the Poisson bracket of $\Gr A$ and 
the codimension of  the singular locus $Z=X-U$ in $X$ is $\geq 2.$
If $\Gr \bold{Z}(A)=(\Gr A)^p$, then the complement of the Azumaya locus 
of $\bold{Z}(A)$ inside the smooth locus of $\bold{Z}(A)$ has codimension $\geq 2$. 

\end{theorem}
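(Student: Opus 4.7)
My plan is to argue by contradiction, using the Rees construction to reduce to a local question at a smooth symplectic point of $X$ where Theorem~\ref{bez} forces the Azumaya property.

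Form the Rees algebra $R:=\bigoplus_{n\geq 0}A_n t^n\subseteq A[t]$, which (after rescaling $t$ to arrange $[R,R]\subseteq tR$) is a flat $\bold{k}[t]$-algebra with $R/tR=\Gr A$, $R/(t-1)R=A$, and centre $\bold{Z}(R)=\bigoplus_n\bold{Z}(A)_n t^n$. By the hypothesis $\Gr\bold{Z}(A)=(\Gr A)^p$, the flat family $\spec\bold{Z}(R)\to\mathbb{A}^1_t$ has generic fibre $Y:=\spec\bold{Z}(A)$ and special fibre $\spec(\Gr A)^p$, identified with the Frobenius twist $X^{(1)}\cong X$. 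Suppose for contradiction that an irreducible component $W$ of the non-Azumaya locus inside the smooth locus of $Y$ has codimension $1$, and let $\tilde W\subseteq\spec\bold{Z}(R)$ be the closed subscheme cut out by the Rees ideal $\bigoplus_n(\bold{Z}(A)_n\cap I(W))t^n$. Then $\tilde W$ is flat over $\bold{k}[t]$, with $\tilde W|_{t=1}=W$ and special fibre $\tilde W|_{t=0}\subset X^{(1)}$ of codimension $1$. Since $Z\subset X$ has codimension $\geq 2$, $\tilde W|_{t=0}$ is not contained in the Frobenius image of $Z$, so pick a closed point $y\in\tilde W|_{t=0}\cap U^{(1)}$.

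Next, choose $\bar f\in\Gr A$ with $\bar f(y)\neq 0$ and $D(\bar f)\subseteq U$, and lift $\bar f^p\in(\Gr A)^p=\Gr\bold{Z}(A)$ to some $f\in\bold{Z}(A)$. The $t$-adic completion $B':=\widehat{R[f^{-1}]}$ is a flat $\bold{k}[[t]]$-algebra with $B'/tB'=\mathcal{O}_{D(\bar f)}$, the coordinate ring of a smooth symplectic affine variety; the hypothesis $\Gr\bold{Z}(A)=(\Gr A)^p$ guarantees that $\bold{Z}(B')\twoheadrightarrow(\mathcal{O}_{D(\bar f)})^p$, the Poisson centre of $\mathcal{O}_{D(\bar f)}$. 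Hence $B'$ is a central quantization in the sense preceding Theorem~\ref{bez}, and that theorem yields that $B'[t^{-1}]$ is simple over its centre; since $B'$ is a finitely generated module of rank $p^{\dim X}$ over $\bold{Z}(B')$, this simplicity is equivalent to $B'[t^{-1}]$ being an Azumaya algebra over $\bold{Z}(B')[t^{-1}]$.

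The remaining and central step is to descend this formal Azumaya-ness to Azumaya-ness of $A[f^{-1}]$ over $\bold{Z}(A)[f^{-1}]$: once we have it, the irreducibility of $\tilde W$ together with $(y,0)\in\tilde W\cap D(ft^{pn})$ forces $W\cap D(f)\neq\emptyset$, producing a point of $W$ that lies in the Azumaya locus of $A$ and contradicting the choice of $W$. I view this descent as the main technical obstacle. A promising route is to apply the formal Azumaya-ness at each closed point $(y,0)$ of $\spec\bold{Z}(B')$, identifying the completion of $B'$ at $(y,0)$ with a completed Weyl algebra over $\bold{k}[[t]]$ via the essentially unique structure of formal quantizations of symplectic polydisks underlying \cite{BK}; comparing this with the completion of $A[f^{-1}]$ at the corresponding closed points $\chi\in Y$, using the symbol map controlled by $\Gr\bold{Z}(A)=(\Gr A)^p$, one deduces that the completion of $A[f^{-1}]$ at $\chi$ is Azumaya, hence $\chi$ lies in the Azumaya locus. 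Since this holds for every closed point of $D(f)$, $A[f^{-1}]$ is Azumaya over $\bold{Z}(A)[f^{-1}]$.
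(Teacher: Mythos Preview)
Your overall architecture matches the paper's: form the Rees algebra of (a localization of) $A$, complete $t$-adically, and invoke Theorem~\ref{bez} to get simplicity/Azumaya-ness of the completed algebra after inverting $t$. The first two paragraphs are essentially correct and parallel the paper. The divergence, and the genuine gap, is entirely in your third paragraph.

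The descent you propose does not work as stated. You want to pass from ``$B'[t^{-1}]$ is Azumaya over its centre'' to ``$A[f^{-1}]$ is Azumaya over $\bold{Z}(A)[f^{-1}]$'', and you attempt this by comparing the completion of $B'$ at a point $(y,0)$ with the completion of $A[f^{-1}]$ at some ``corresponding'' closed point $\chi\in Y$. But there is no such correspondence: $(y,0)$ lives over $t=0$ while $\chi$ lives over $t=1$, and the symbol map does not furnish a map of closed points between these fibres. Knowing that the formal neighbourhood of $(y,0)$ in $B'$ is a completed Weyl algebra over $\bold{k}[[t]]$ tells you nothing directly about the completion of $A$ at a $\bold{k}$-point $\chi$. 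Moreover, the conclusion you are aiming for, that $A[f^{-1}]$ is Azumaya on \emph{all} of $D(f)$, is stronger than what the theorem asserts and need not hold: the non-Azumaya locus can meet $D(f)$ in a set of codimension~$\geq 2$. Finally, the map $\bold{Z}(A)[f^{-1}]\to \bold{Z}(B')[t^{-1}]$ is flat but not faithfully flat (the $t$-adic completion is only faithful over the $t$-adic locus), so you cannot simply descend the Azumaya property globally.

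The paper's resolution is to abandon closed points and work instead at height-$1$ primes. Given a height-$1$ prime $I\subset\bold{Z}(A)$ in the smooth locus, one shows that $I$ lies in the image of $\spec\overline{R(\bold{Z}(S))}[t^{-1}]\to\spec\bold{Z}(A)_g$; this is the substantive step, handled by two short lemmas. The first says that $I$ extends to the completion exactly when $I$ contains no element whose principal symbol is (up to a scalar) a power of $\sigma(g)$; the second says $\mathrm{ht}(I)=\mathrm{ht}(\Gr I)$, so for a suitable choice of $g$ (with $\sigma(g)$ vanishing on $Z$ but $\Gr I\not\subset V(\sigma(g))$, possible since both have codimension~$\geq 1$ and~$=1$ respectively) no such element exists. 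One then localizes at a prime $J$ lying over $I$: the map $\bold{Z}(A)_I\to\overline{R(\bold{Z}(S))}[t^{-1}]_J$ is now a flat local map, hence faithfully flat, and simplicity of $A_I\otimes_{\bold{Z}(A)_I}\overline{R(\bold{Z}(S))}[t^{-1}]_J$ descends to simplicity of $A_I$; since $\bold{Z}(A)_I$ is a regular local ring of dimension~$1$, $A_I$ is then Azumaya. Your degeneration $\tilde W$ and its special fibre meeting $U^{(1)}$ is essentially a geometric repackaging of this height-preservation lemma, but it does not by itself supply the faithfully flat base change you need.
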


\begin{proof}

 Let $f\in (\Gr A)^p$ be a nonzero homogeneous element which vanishes on the singular locus
 of $\spec \Gr A,$ so $f(Z)=0$. Thus, by the assumption $\spec (\Gr A_f)$ is a symplectic variety. 
 Let us consider
 an element $g\in \bold{Z}(A)$ such that $\sigma (g)=f$ (from now on $\sigma$ will denote the principal part of an element with respect to the filtration). Let us put
 $S=A_g, \deg(g)=d, d$ is a positive integer. 
 Let us consider
 an induced filtration on $A_g$ coming from the filtration on $A$, 
 namely $\deg g^{-1}=-d$.
 Then, $\Gr S= (\Gr A)_f$. Let us consider the Rees algebra of 
 $S: R(S)=\sum S_mt^m\subset S[t, t^{-1}],$ where $S_m$ denotes the 
 set of elements of $S$ of the filtration degree $\leq m$.
  Clearly $R(A)$ is a finitely generated module
 over its center $\bold{Z}(R(A))=R(\bold{Z}(A))$ (since $R(A)$ is positively graded), and
 since $R(S)$ is the localization $R(A)$ by a central element $gt^{d}$, we see 
 that $R(S)$ is finite over its center $\bold{Z}(R(S))$.
 
 Let us complete $R(S)$ with respect to $t\in R(S).$
Denote this completion by $\overline {R(S)},$ so $\overline {R(S)}=\varprojlim R(S)/t^nR(S)$ . 
We have that $\overline {R(S)}$ is a flat 
module over $k[[t]]$, $\overline {R(S)}/t\overline {R(S)}=(\Gr A)_f.$ Thus by Theorem \ref{bez},
$\overline {R(S)}[t^{-1}]$ is simple over $\overline{R(\bold{Z}(S))}[t^{-1}].$ 
Notice that $\overline {R(S)}=R(S)\otimes_{\bold{Z}(R(S))} \overline{R(\bold{Z}(S))}$, so
$$\overline{R(S)}[t^{-1}]=R(S)[t^{-1}]\otimes_{\bold{Z}(R(S))[t^{-1}]}\overline{R(\bold{Z}(S))}[t^{-1}].$$
But, $R(S)[t^{-1}]=S[t,t^{-1}]$, so we see that 
$\overline{R(S)}[t^{-1}]=S\otimes_{\bold{Z}(S)}\overline{R(\bold{Z}(S))}[t^{-1}],$ 
where we use the embedding $i:\bold{Z}(S)\to \overline{R(\bold{Z}(S))}[t^{-1}]$.
Thus, we see that if 
$I\in \spec \bold{Z}(A)$ is a prime ideal of height 1 in the smooth locus of $\bold{Z}(A)$
such that  $g\notin I$ and $I $ belongs to the image of 
$i^{*}:\spec \overline{R(\bold{Z}(S))}[t^{-1}]\to \spec \bold{Z}(A)$, then there is 
a faithfully flat base change $\bold{Z}(A)_{I}\to B,$ where $B=\overline{R(\bold{Z}(S))}[t^{-1}]_J, J\cap \bold{Z}(A)=I,$ 
such that 
$A_I\otimes_{\bold{Z}(A)_I}B$ is a simple algebra over $B.$ 
Therefore, $A_I$ is simple over $\bold{Z}(A)_I.$ Since $\bold{Z}(A)_I$ is
a regular local ring of dimension 1 and $A_I$ is torsion-free over $\bold{Z}(A)_I$, we get
that $A_I$ is projective over $\bold{Z}(A)_I$. Therefore, $A_I$ is Azumaya, so $I$ belongs to the Azumaya locus of $\bold{Z}(A).$ 

Since we want to show that all primes of height 1 from the smooth locus 
of $\bold{Z}(A)$ belong to the Azumaya locus, it is enough to 
show that for any such prime $I\in \spec\bold{Z}(A),$ there exists $f\in (\Gr A)^p$
such that $f(Z)=0,$ and $I\in i^{*}\spec \overline{R(\bold{Z}(S))}[t^{-1}],$ for some
$g\in A$ with $\sigma(g)=f.$ Notice that  $\overline{R(\bold{Z}(S))}[t^{-1}]$ is a subring
of $\overline{\bold{Z}(S)}((t))$, where $\overline{\bold{Z}(S)}$ is a completion of the
filtered ring $\bold{Z}(S)=\bold{Z}(A)_g$ with respect to negative degree subspaces, 
meaning that $\overline{\bold{Z}(S)}$ is the inverse limit $\bold{Z}(S)/\bold{Z}(S)_n$ as 
$n\to -\infty,$ where $\bold{Z}(S)_n$ denoted the $n$-th degree filtration subspace
of $\bold{Z}(S).$ More precisely, elements of $\overline{R(\bold{Z}(S))}[t^{-1}]$ are
of the form $\sum_i^{\infty} s_it^i,$ where $s_i\in \overline{\bold{Z}(S)}$ such that
limit of $i-\deg(s_i)$ is $\infty$ as $i\to \infty,$ and $s_i=0$ 
for sufficiently small $i<<0.$
Thus, if $j:\bold{Z}(S)\to \overline{\bold{Z}(S)}$ denotes the embedding, then
if $I\in Im(j^{*}), j^{*}:\spec(\overline{\bold{Z}(S)})\to \spec(\bold{Z}(S)),$ then
$I$ belongs to the Azumaya locus. Invertible elements of $\overline{\bold{Z}(S)}$ are
precisely those elements whose principal symbol is a power of $f$ times an element of $\bold{k^{*}}$. 
It follows from 
the following trivial lemma that
a prime ideal $I$ of height 1 is in the image of $j^{*}$ if and only if
$I$ contains no elements whose
principal symbol is a power of $f$ up to a an element of $\bold{k}^{*}.$ 

\begin{lemma} Let $j:R\to S$ be an embedding of integral domains, and let $I$ be
a prime ideal of height 1 of $R$ which belongs to the smooth locus of $\spec R.$ If
$I\cap S^{*}=\emptyset,$ then $I\in im(j^{*}):\spec S\to \spec R.$

\end{lemma}
\begin{proof}Without loss of generality we may assume that $R$ is a local ring with
$I$ being the maximal ideal. Therefore, $I=(g)$ for some $g\in R.$ Since $g\notin S^{*},$
there is a prime ideal $J\in \spec S$ such that $g\in J.$ Therefore, $I=j^{-1}(J).$

\end{proof}

Now what we want follows 
from the following well-known fact.

\begin{lemma} Let $B$ be a nonnegatively filtered finitely generated commutative algebra
over $k$, and let $I\subset B$ be an ideal. Then $ht(I)=ht\Gr I.$
\end{lemma}

\begin{proof} We have that the Gelfand-Kirillov dimension of $B/I$ equals to that
of $\Gr B/\Gr I=\Gr (B/I)$, which implies that $ht(I)=ht(\Gr I),$ since $dim B=dim \Gr B.$

\end{proof}
\end{proof}

Note that if $\spec \Gr A$ consists of finitely many symplectic leaves, and 
$\Gr \bold{Z}(A)=(\Gr A)^p$, then assumptions of Theorem \ref{muh} are satisfied.

 We will also use the following 
 
 \begin{lemma}\label{raccoon} Let $M$ be a finitely generated positively filtered module 
 over a positively filtered
 commutative algebra $H,$ such that $\Gr H$ is a domain and
 $\Gr M$ is finite over $\Gr H.$ Then
 the rank of $M$ over $H$ is equal to the rank of $\Gr M$ over $\Gr H.$
 
 \end{lemma}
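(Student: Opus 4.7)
My plan is to relate both ranks to the common asymptotic ratio $\dim_{\bold{k}} M_n/\dim_{\bold{k}} H_n$.

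First, I would observe that $H$ is itself a domain: if $ab=0$ for nonzero $a,b\in H$, the principal symbols $\sigma(a),\sigma(b)$ are nonzero in $\Gr H$ and multiply to zero, contradicting the domain hypothesis on $\Gr H$. Hence $\mathrm{rk}_H(M)$ is well-defined. Let $d$ denote the common Krull (equivalently, Gelfand--Kirillov) dimension of $H$ and $\Gr H$.

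On the associated-graded side, $\dim_{\bold{k}}(\Gr H)_n$ is (for $n\gg 0$) a quasi-polynomial of degree $d-1$, and $\dim_{\bold{k}}(\Gr M)_n$ is of the same degree with leading coefficient scaled by $s:=\mathrm{rk}_{\Gr H}(\Gr M)$. Partial summation of the identities $\dim H_n=\sum_{k\le n}\dim(\Gr H)_k$ and $\dim M_n=\sum_{k\le n}\dim(\Gr M)_k$ (the summation averaging out any periodic component of the leading term) yields genuine polynomial asymptotics $\dim H_n\sim (c/d)n^d$ and $\dim M_n\sim (sc/d)n^d$, so $\dim M_n/\dim H_n\to s$.

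On the filtered side, I would take a prime filtration $0=M^{(0)}\subset\cdots\subset M^{(l)}=M$ of $M$ as an $H$-module with subquotients $M^{(i)}/M^{(i-1)}\cong H/\mathfrak{p}_i$; by standard commutative algebra, exactly $r:=\mathrm{rk}_H(M)$ of the primes $\mathfrak{p}_i$ equal $(0)$. Inducing the filtration from $M$ onto each $M^{(i)}$, the $r$ subquotients with $\mathfrak{p}_i=(0)$ contribute filtered copies of $H$, each with growth $\sim (c/d)n^d$, while each subquotient with $\mathfrak{p}_i\ne(0)$ has Krull dimension strictly less than $d$, and hence its induced filtration has growth $o(n^d)$. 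Summing yields $\dim M_n\sim r(c/d)n^d$, so $\dim M_n/\dim H_n\to r$, and comparing with the previous step gives $r=s$.

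The delicate step is verifying that the filtrations induced on the quotients $H/\mathfrak{p}_i$ grow at a rate reflecting their true (smaller) Krull dimension rather than inheriting the full $n^d$ growth of the ambient $M$. A more uniform alternative packages everything via the Rees algebra $R(H)=\bigoplus_n H_nt^n\subset H[t]$ and module $R(M)=\bigoplus_n M_nt^n$: using $R(H)[t^{-1}]\cong H[t,t^{-1}]$, $R(H)/tR(H)\cong\Gr H$, and the short exact sequence $0\to R(H)(-1)\xrightarrow{\cdot t}R(H)\to\Gr H\to 0$, one computes the graded rank of $R(M)$ over $R(H)$ both by localizing at $t$ (giving $r$) and by reading the Poincar\'e series via the above exact sequence (giving $s$), obtaining $r=s$ in a single step.
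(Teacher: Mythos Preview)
Your growth-rate argument has a real gap, and it is not where you flagged it. The step you call delicate --- that the subquotients $H/\mathfrak p_i$ with $\mathfrak p_i\neq 0$ have growth $o(n^d)$ --- is actually routine: the induced filtration is good (its associated graded is a subquotient of $\Gr M$, hence finitely generated over $\Gr H$), and for a good filtration the growth exponent is the Krull dimension of the module, which is $<d$. The unjustified claim is the other one: that each subquotient with $\mathfrak p_i=0$, i.e.\ each copy of $H$, has growth \emph{exactly} $\sim(c/d)n^d$. The filtration it carries is the one induced from $M$, not the original filtration on $H$, so its associated graded is some finitely generated $\Gr H$-module whose rank you do not yet know to be $1$. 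A priori the leading coefficient of its Hilbert function is $(\mathrm{rk}_{\Gr H}\Gr N)\cdot c/d$, and determining $\mathrm{rk}_{\Gr H}\Gr N$ for a cyclic free $N$ is an instance of the very statement you are proving. One does get the lower bound $\dim N_n\ge\dim H_{n-k}$ cheaply (pick a generator in degree $k$), so your argument in fact yields $r\le s$; but for the reverse inequality you still need the elementary observation that $\Gr H$-independent symbols lift to $H$-independent elements, i.e.\ $s\le r$. Once you insert that, the prime-filtration computation becomes correct --- but also redundant, since $s\le r$ together with $r\le s$ already finishes the proof.

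For comparison, the paper proceeds exactly along these two inequalities but more directly. It first records the easy direction $s\le r$ by lifting linearly independent symbols. For $s\ge r$ it passes to the Rees module $R(M)$ over $R(H)$: since $R(M)/(t-\lambda)R(M)\cong M$ for $\lambda\neq 0$ and $R(M)/tR(M)\cong\Gr M$, the generic fibre of $R(M)$ over $\spec R(H)$ has rank $r$, and upper semi-continuity of fibre dimension at $t=0$ gives $s\ge r$. Your ``uniform alternative'' via the Rees algebra is pointed in this direction, but the phrase ``reading the Poincar\'e series via the exact sequence'' does not by itself extract $s$; what is actually being used is semi-continuity of rank under the specialization $t\to 0$, together with the localization $R(H)[t^{-1}]\cong H[t,t^{-1}]$ to identify the generic rank with $r$.
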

\begin{proof}

First of all, it is easy to see that the rank of $\Gr M$ over $\Gr H$ is 
$\leq rank_{H}M.$
Indeed, if $x_1,..., x_n\in H$ are elements such that their principal parts
$\sigma(x_1),\cdots, \sigma(x_n)\in \Gr M$ are $\Gr H$-linearly independent, 
then
$x_1,\cdots, x_n$ are $H$-linearly independent in $M.$ Now let us 
consider $R(M),$ the Rees module of $M$ over the Rees algebra $R(H).$
$R(M)$ is a finitely generated $R(H)$-module. From the fact that 
$R(M)/(t-\lambda)=M, \lambda\neq 0, R(M)/tR(M)=\Gr M$ we see that the generic
dimension of fibers of $R(M)$ over $R(H)$ have dimension equal to $rank_HM$. 
Therefore, by the semi-continuity, we conclude that 
$rank_{\Gr H}\Gr M\geq rank_HM$, so we are done.

\end{proof}
 
 \begin{lemma}\label{sheep}Suppose that an algebra $A$ satisfies all the assumptions
 of Theorem \ref{muh}, except that $\Gr \bold{Z}(A)=(\Gr A)^p$. If $\Gr A$ is normal, then $\Gr \bold{Z}(A)\subset (\Gr A)^p.$

\end{lemma}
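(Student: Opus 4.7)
The plan is to show, for every $z\in\bold{Z}(A)$, that its principal symbol $\sigma(z)\in\Gr A$ lies in $(\Gr A)^p$; since $\Gr\bold{Z}(A)$ is spanned by such symbols, this gives the desired inclusion. The strategy has three steps: first establish that $\sigma(z)$ is Poisson central in $\Gr A$; second, use the symplectic structure on $U$ to identify Poisson-central functions on $\bold{k}[U]$ with $\bold{k}[U]^p$; third, invoke normality of $\Gr A$ to descend this conclusion from $U$ to all of $X$.

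For the first step, fix $z\in A_n\cap\bold{Z}(A)$ and pick any $a\in A_m$. Since $[z,a]=0$, its image in $\Gr A_{n+m-d}$ vanishes, and by the definition of the induced Poisson bracket this means $\{\sigma(z),\sigma(a)\}=0$. As the symbols of elements of $A$ generate $\Gr A$, $\sigma(z)$ is Poisson central.

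For the second step, restrict to $U$. Poisson centrality of $\sigma(z)|_U$ is equivalent to vanishing of the Hamiltonian vector field $X_{\sigma(z)}$, and nondegeneracy of the symplectic form on $U$ converts this into $d\sigma(z)|_U=0$. On the smooth variety $U$ in characteristic $p$, the kernel of the universal derivation $d\colon\bold{k}[U]\to\Omega^1_{U/\bold{k}}$ is exactly $\bold{k}[U]^p$, so $\sigma(z)|_U=g^p$ for some $g\in\bold{k}[U]$.

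For the third step, since $\Gr A$ is a normal finitely generated domain and $Z=X\setminus U$ has codimension at least $2$, regular functions on $U$ extend uniquely to $X$, so $\bold{k}[U]=\Gr A$. Hence $g\in\Gr A$ and $\sigma(z)=g^p\in(\Gr A)^p$. The main obstacle is the middle step, which combines nondegeneracy of the symplectic form with the characteristic-$p$ identification $\ker(d)=\mathcal{O}^p$ on smooth schemes; the first and third steps are essentially formal once this ingredient is in place.
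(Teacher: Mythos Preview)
Your proof is correct and follows essentially the same route as the paper's: symbol of a central element is Poisson central, on the symplectic smooth locus $U$ Poisson centrality forces $d\sigma(z)|_U=0$, hence $\sigma(z)|_U$ is a $p$-th power on the smooth variety $U$, and normality plus codimension $\geq 2$ lets you extend the $p$-th root back to $X=\spec\Gr A$. You have simply made explicit the intermediate points (why the symbol is Poisson central, why $\ker d=\mathcal{O}^p$ on a smooth $\bold{k}$-variety in characteristic $p$) that the paper leaves implicit.
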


\begin{proof} Let $f\in \bold{Z}(A),$ then its top symbol $\bar{f}$ belongs
to the Poisson center of $\Gr A.$ Let $U\subset X$ be the smooth locus of
$X.$ By the assumption $U$ is a symplectic variety and $\bar{f}|U$ belongs to the Poisson center
of $O(U).$ Therefore, $df|_U=0,$ hence, there exists $g\in O(U),$ such that ${\bar{f}|}_U=g^p.$ Since $X-U$
has codimension $\geq 2$ in $X$ and $X$ is normal, $g$ extends to the whole $X.$ Thus, that $\bar{f}\in (\Gr A)^p.$

\end{proof}

\begin{theorem}\label{weiwei}

Let $A$ be an algebra over $\bold{k}$ equipped with an increasing 
algebra filtration $\bold{k}=A_0\subset A_1\subset\cdots\subset A_n\cdots,$
such that $\Gr A$ is a finitely generated smooth commutative domain over $\bold{k}.$
Suppose that there exist a central 
subalgebra of $A,$ $\bold{Z}_0\subset \bold{Z}(A),$ 
such that $\Gr A/(\Gr A)(\Gr Z_0)_{+}$ is a domain whose smooth locus 
is a symplectic variety under the natural Poisson bracket
and whose complement has codimension $\geq 2.$ Assume moreover 
that $\Gr A/(\Gr A)(\Gr \bold{Z}_0)_{+}$ is normal.
If  $(\Gr A)^p\subset \Gr \bold{Z}(A),$ then the smooth and the Azumaya locus
of $A$ coincide, and the PI-degree of $A$ is $p^d,$ where 
$2d=dim(\Gr A)/(\Gr A)(\Gr \bold{Z}_0)_{+}.$

\end{theorem}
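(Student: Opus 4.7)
The plan is to invoke the Brown--Goodearl criterion [BG, Theorem~3.8]. Primeness, Noetherianity, Auslander-regularity, and the Cohen--Macaulay property of $A$ are inherited from $\Gr A$ (a finitely generated smooth commutative domain) by standard lifting-along-the-filtration arguments, so the substantive tasks are: (i) to bound the codimension of the complement of the Azumaya locus in $\spec \bold{Z}(A)$ below by $2$, and (ii) to compute the PI-degree.

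My first move would be to establish the identification $\Gr \bold{Z}(A) = (\Gr A)^p \cdot \Gr \bold{Z}_0$. The inclusion $\supseteq$ is immediate from the hypotheses. For $\subseteq$, I adapt Lemma~\ref{sheep}: given $f \in \Gr \bold{Z}(A)$, its image $\bar{f}$ in the symplectic fibre $B_0 := \Gr A/(\Gr A)(\Gr \bold{Z}_0)_+$ is Poisson central, so $d\bar{f} = 0$ on the symplectic smooth locus; normality of $B_0$ together with the codimension-$2$ hypothesis yields $\bar{f} = h^p$ for some $h \in B_0$. Lifting and subtracting from $f$ produces a Poisson-central element of $(\Gr A)(\Gr \bold{Z}_0)_+$, and an induction on $(\Gr \bold{Z}_0)_+$-adic depth delivers the desired presentation.

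For (i), I would follow the blueprint of Theorem~\ref{muh}, but with a two-parameter deformation. Given a height-$1$ prime $I$ in the smooth locus of $\bold{Z}(A)$, pick a homogeneous $f \in (\Gr A)^p$ whose image in $B_0$ is nonzero and vanishes on the singular locus of $\spec B_0$, and lift to $g \in \bold{Z}(A) \setminus I$ using $(\Gr A)^p \subset \Gr \bold{Z}(A)$. Set $S = A_g$ and complete its Rees algebra $R(S)$ along the ideal $(t) + R(S) \cdot (\Gr \bold{Z}_0)_+$. After passing to the generic point in the formal $(\Gr \bold{Z}_0)_+$-direction, the resulting $\bold{k}[[t]]$-flat algebra has special fibre the affine symplectic variety $(B_0)_{\bar f}$; Bezrukavnikov--Kaledin (Theorem~\ref{bez}) then yields simplicity over its center after inverting $t$. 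The faithfully flat descent argument of Theorem~\ref{muh} shows that $A_I$ is simple over $\bold{Z}(A)_I$; torsion-freeness together with regularity of $\bold{Z}(A)_I$ in dimension one promotes simplicity to Azumaya.

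Once Brown--Goodearl applies, (ii) follows by a rank calculation: $(\text{PI-degree})^2 = \operatorname{rank}_{\bold{Z}(A)} A = \operatorname{rank}_{\Gr \bold{Z}(A)} \Gr A$ by Lemma~\ref{raccoon}. Using the description of $\Gr \bold{Z}(A)$ from the first step and Frobenius rank of smooth finite-type domains over a perfect base,
\[
[\operatorname{Frac}(\Gr A) : \operatorname{Frac}(\Gr \bold{Z}(A))] = p^{\dim \Gr A - \dim \Gr \bold{Z}_0} = p^{2d},
\]
so the PI-degree equals $p^d$. The main obstacle is (i): in Theorem~\ref{muh}, inverting a single element $f$ sufficed to reach a symplectic variety, but here one must simultaneously run the filtration Rees deformation \emph{and} ``integrate out'' the $\Gr \bold{Z}_0$-direction before Theorem~\ref{bez} applies, and ensuring that the descent back to $A_I$ remains compatible with both processes is the technically delicate point.
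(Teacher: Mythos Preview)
Your overall strategy---Brown--Goodearl criterion, codimension-$2$ bound, rank computation via Lemma~\ref{raccoon}---matches the paper's, but your route to the codimension bound is both more complicated and, as you yourself flag, not obviously sound at the critical step.

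The paper never attempts a two-parameter deformation. Instead it \emph{fibres over} $\spec\bold{Z}_0$: for each character $\chi_0:\bold{Z}_0\to\bold{k}$ it forms the quotient $A_{\chi_0}=A\otimes_{\bold{Z}_0}\bold{k}$, notes that $\Gr A_{\chi_0}=\Gr A/(\Gr A)(\Gr\bold{Z}_0)_+$, uses Lemma~\ref{sheep} together with the hypothesis $(\Gr A)^p\subset\Gr\bold{Z}(A)$ to obtain $\Gr\bold{Z}(A_{\chi_0})=(\Gr A_{\chi_0})^p$, and then applies Theorem~\ref{muh} \emph{directly to $A_{\chi_0}$}. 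That gives codimension~$\geq 2$ for the non-Azumaya locus in every fibre. The one extra ingredient is the equality $\mathrm{PI}\text{-}\deg(A)=\mathrm{PI}\text{-}\deg(A_{\chi_0})$ for all $\chi_0$, proved by a squeeze: semi-continuity of fibre rank gives one inequality, comparison of maximal irreducible dimensions the other. Once PI-degrees agree, the Azumaya locus of $A$ meets each fibre $f^{-1}(\chi_0)$ in the Azumaya locus of $A_{\chi_0}$, and the global codimension bound follows fibrewise.

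Your proposed ``complete $R(S)$ along $(t)+R(S)\cdot(\Gr\bold{Z}_0)_+$, then pass to the generic point in the formal $(\Gr\bold{Z}_0)_+$-direction'' is precisely the manoeuvre the paper's fibre-by-fibre reduction renders unnecessary; it is not clear that this mixed completion-plus-localization produces a flat $\bold{k}[[t]]$-algebra whose special fibre is $(B_0)_{\bar f}$, nor that faithfully flat descent to $\bold{Z}(A)_I$ survives both processes simultaneously. Likewise your Step~1 (the $(\Gr\bold{Z}_0)_+$-adic induction identifying $\Gr\bold{Z}(A)$ globally) is not used in the paper and would need an Artin--Rees or grading argument to terminate. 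The PI-degree computation at the end is essentially the same in both approaches.
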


\begin{proof}

Let $\chi_0:\bold{Z_0}\to \bold{k}$ be a character, then we can consider the quotient
algebra $A_{\chi_0}=A\otimes_{\bold{Z_0}}\bold{k}.$ Then, $A_{\chi_0}$ comes equipped
with the natural filtration induced from $A$
and $\Gr A_{\chi_{0}}=\Gr A/(\Gr \bold{Z_0})_{+},$ also lemma \ref{sheep} with the assumption 
$\Gr \bold{Z}(A)\subset (\Gr A)^p$ implies that 
$\Gr\bold{Z}(A_{\chi_{0}})=(\Gr A/(\Gr \bold{Z_0})_{+}))^p.$ Thus, we may apply our proposition,
which implies that the Azumaya locus of $\bold{Z}(A_{\chi_0})$ has complement
of codimension at least 2. Now we claim that a character $\chi:\bold{Z}(A)\to \bold{k}$
belongs to the Azumaya locus of $\bold{Z}(A)$ if and only if the corresponding
character $\chi_0:\bold{Z}(A_{\chi_{0}})\to \bold{k}$ belongs to the Azumaya locus
of $A_{\chi_{0}},$ where $\chi_0$ is the restriction of $\chi$ on $\bold{Z_0}.$
It is enough to check that the PI-degree of $A$ is equal to the PI-degree
of $A_{\chi_{0}}$ for any character $\chi_0:\bold{Z_0}\to \bold{k}.$ But this
is clear because the PI-degree of $A_{\chi_0},$ which is 
is equal to the square root of the dimension of the generic fiber of $A_{\chi_{0}}$ (\cite{BG}), 
is greater
or equal to the PI-degree  of $A$ (the square root of the dimension of the generic fiber of $A$) 
by the semi-continuity of the rank of fibers of
$A.$ 
On the other hand, the PI-degree of $A$ is
 the largest dimension of an irreducible
module of $A$, which is greater or equal to the
largest possible dimension of an irreducible
module of $A_{\chi_{0}},$ which is precisely the PI-degree of $A_{\chi}.$
 
 So, let $U\subset \spec \bold{Z}(A)$ be the Azumaya locus of $A$.
We have a map $f:\spec \bold{Z}(A)\to \spec \bold{Z_0}$ corresponding
to the inclusion $\bold{Z_0}\subset \bold{Z}(A).$ Let us denote by $Y$ the complement
of the Azumaya locus $Y=\spec \bold{Z}(A)-U$. Then for any closed point 
$\chi\in \spec \bold{Z_0}$, the intersection $f^{-1}(\chi)\cap Y$ has codimension
at least $2$ in $f^{-1}(\chi).$ So, the codimension of $Y$ in $\spec \bold{Z}(A)$
is at least $ 2$. Now, since $A$ is Auslander-regular and Cohen-Macaulay, the above mentioned
 result of Brown-Goodearl \cite{BG} implies the coincidence of the Azumaya and smooth loci.

 By lemma \ref{raccoon}, the PI degree of $A$ is equal to the rank of $\Gr A/(\Gr A)(\Gr \bold{Z_0})_{+}$
over $(\Gr A/(\Gr A)(\Gr \bold{Z_0})_{+})^p,$ which is $p^d$, where $2d$ is the Krull dimension of 
$\Gr A/(\Gr A)(\Gr \bold{Z_0})_{+}$

\end{proof}

\section{Applications to symplectic reflection algebras and enveloping algebras}
 
 Let us recall the definition of a symplectic reflection algebra. 
Let $V$ be a symplectic $\bold{k}$-vector space with the symplectic
 form $\omega:V\times V\to \bold{k}.$
An element $g\in Sp(V)$ is called a symplectic reflection if $rank(Id-g)=2.$
 To a symplectic reflection $s\in Sp(V)$ one may associate a skew-symmetric
 form $\omega_s:V\times V\to \bold{k}$ which coincides with $\omega$ on $Im(Id-s)$
 and is 0 on $Ker(Id-s).$ Let
 $G\subset Sp(V)$ be a finite group generated by symplectic reflections.
  To a $G$-invariant function $c:S\to \bold{k}$ and $t\in \bold{k},$ where
 $S\subset G$ is the subset of symplectic reflection of $G,$ Etingof and Ginzburg \cite{EG}
 associated an algebra (called  a symplectic reflection algebra) $H_{t,c}$ which
 is defined as a quotient of $\bold{k}[G]\ltimes T(V)$ by the relations
 
  $$[x, y]=t\omega(x, y)+\sum_{s\in S}\omega_{s}(x, y)c(s)s.$$
 
 There is a filtration on $H_{t, s} \deg v=1, v\in V, \deg g=0, g\in G$. The crucial
 property is that $\Gr H_{t, c}=\bold{k}[G]\ltimes \Sym V$ \cite{EG}.
 
 The following theorem answers positively two questions 
  raised by Brown-Changtong [\cite{BC} Questions 6.1, 6.4] , and proved by them in the case of a rational Cherednik algebra.
 
 \begin{theorem}
Let $H_{t, c}$ be a symplectic reflection algebra associated to 
$G\subset Sp(V), dim V=2n, t\neq 0$ and $p$ does not divide the order of $G$, then
the smooth and the Azumaya loci of the center of $H_{t, c}$ coincide, and the PI-degree
of $H_{t, c}$ (the maximal dimension of an irreducible module) is equal to $p^n|G|$ . 
 
 \end{theorem}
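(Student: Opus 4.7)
The plan is to reduce, via Morita equivalence, to Theorem~\ref{muh} applied to the spherical subalgebra $\widetilde{U}:=eH_{t,c}e$, where $e=\frac{1}{|G|}\sum_{g\in G}g$. Since $p\nmid|G|$, the idempotent $e$ is well-defined, and for $t\neq 0$ one has $H_{t,c}\,e\,H_{t,c}=H_{t,c}$, so $H_{t,c}$ and $\widetilde{U}$ are Morita equivalent; in particular their centres coincide and the Azumaya and smooth loci match (as subschemes of $\spec\bold{Z}(H_{t,c})$). It therefore suffices to prove the theorem for $\widetilde{U}$. Note that Theorem~\ref{weiwei} cannot be applied directly to $H_{t,c}$, since $\Gr H_{t,c}=\bold{k}[G]\ltimes\Sym V$ is not commutative, and for the same reason one must work with $\widetilde{U}$ rather than with $H_{t,c}$ itself.

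Under the PBW filtration, $\Gr\widetilde{U}=(\Sym V)^G$ is a commutative, finitely generated, normal domain. The smooth locus of $\spec(\Sym V)^G=V^*/G$ is symplectic, and its singular locus — the image of the union of fixed loci of symplectic reflections — has codimension $2$ (this is forced by the very definition of a symplectic reflection). The remaining hypothesis of Theorem~\ref{muh} is $\Gr\bold{Z}(\widetilde{U})=((\Sym V)^G)^p$: the inclusion $\subseteq$ is Lemma~\ref{sheep}, while $\supseteq$ requires lifting every $p$-th power in $(\Sym V)^G$ to an actual central element of $\widetilde{U}$. This lifting is the core technical point; I would establish it via the standard $p$-centre construction for Poisson quantizations in positive characteristic — averaging the Frobenius-like lifts over $G$ (harmless because $p\nmid|G|$) produces the required central elements. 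Granted this, Theorem~\ref{muh} forces the complement of the Azumaya locus of $\bold{Z}(\widetilde{U})$ inside the smooth locus to have codimension $\geq 2$. Combined with the Auslander-regularity and Cohen-Macaulay property of $\widetilde{U}$ (standard for SRAs) and the Brown-Goodearl criterion recalled in the introduction, the smooth and Azumaya loci of $\widetilde{U}$ coincide — hence those of $H_{t,c}$.

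For the PI-degree, a rank count via Lemma~\ref{raccoon} gives $\widetilde{U}$ of rank $p^{2n}$ over its centre (since $(\Sym V)^G$ has rank $p^{2n}$ over $((\Sym V)^G)^p$), so $\widetilde{U}$ has PI-degree $p^n$. The Morita bimodule $eH_{t,c}$ has generic rank $|G|$ as a right $\widetilde{U}$-module, so $H_{t,c}\cong\End_{\widetilde{U}}(eH_{t,c})$ has rank $|G|^2\cdot p^{2n}$ over $\bold{Z}(H_{t,c})$, yielding PI-degree $p^n|G|$ as claimed. The main obstacle in the programme is producing the central lifts required for the identification $\Gr\bold{Z}(\widetilde{U})=((\Sym V)^G)^p$; this is precisely the type of input for which Theorem~\ref{bez} and the Bezrukavnikov-Kaledin machinery were designed, and its careful execution is what makes the argument of Theorem~\ref{muh} effective in the present setting.
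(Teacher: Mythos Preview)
Your strategy is the paper's: pass to the spherical subalgebra $eH_{t,c}e$, check the hypotheses of Theorem~\ref{muh} there, and transfer back to $H_{t,c}$ via Morita equivalence together with the Brown--Goodearl criterion. Two points of execution differ. First, the equality $\Gr\bold{Z}(eH_{t,c}e)=((\Sym V)^G)^p$ is not argued in the paper but quoted as a theorem of Etingof in \cite{BFG}; your proposed ``$p$-centre plus averaging over $G$'' sketch is morally the right picture, but this is a genuine theorem, not a routine construction. Second, the paper applies Brown--Goodearl to $H_{t,c}$ itself, whose Auslander-regularity and Cohen--Macaulayness follow from $\Gr H_{t,c}=\bold{k}[G]\ltimes\Sym V$ (finite global dimension); you instead invoke these properties for $\widetilde{U}$, but $\Gr\widetilde{U}=(\Sym V)^G$ is typically \emph{singular}, so Auslander-regularity of $\widetilde{U}$ does not come from the filtration and your ``standard for SRAs'' is not a justification---it would have to be pulled back through the very Morita equivalence you are using, at which point one may as well apply Brown--Goodearl to $H_{t,c}$ directly, as the paper does. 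For the PI-degree the paper argues via Lemma~\ref{raccoon} that it is independent of $c$ and then specialises to $c=0$; your Morita rank count is a correct alternative.
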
 
 \begin{proof}

 Let us consider $U_{t, c}=eH_{t, c}e,$ the spherical subalgebra of the symplectic
 reflection algebra $H_{t, c},$ where $e=\frac{1}{|G|}\sum _{g\in G}g$
 is the symmetrizing idempotent of $G$. 
 By a theorem of Etingof \cite{BFG}, $\Gr \bold{Z}(U_{t, c})=((\Sym V)^G)^p=(\Gr U_{t, c})^p.$ 
 Theorem \ref{muh} can be
 applied to $U_{t, c},$ since $\Gr U_{t, c}=S(V)^G\subset Sym V$ has finitely 
 many symplectic
 leaves by a result of Brown-Gordon [\cite{BGo} Proposition 7.4]. So, we get that for all prime
 ideals $I$ of the smooth locus of $\bold{Z}(U_{t, c})$ of height 1, the algebra
 ${U_{t, c}}_{I}$ is Azumaya. But by a result of Brown-Changtong [\cite{BC} Corollary 3.6.], $(H_{t,c})_I$
is Morita equivalent to $(U_{t,c})_{I}$, so the complement of the Azumaya locus in the smooth locus of $H_{t, c}$ has codimension $\geq 2.$ Since $H_{t, c}$ is Auslander-regular
and Cohen-Macaulay (\cite{BC}), by Brown-Goodearl \cite{BG}, smooth
and the Azumaya loci coincide for $\bold{Z}(H_{t, c})$. Lemma \ref{raccoon} applied
to $M=H_{t, c}, H=\bold{Z}(H_{t,c})$ implies that 
the PI-degree is independent of $c$, so we may take $c=0,$ in which case the 
desired statement is clear.

\end{proof}

Applying the considerations from the previous section, we may take $A$ to be the enveloping algebra of a semi-simple Lie
algebra $\mf{g}$, and take $\bold{Z_0}\subset \bold{Z}(A)$ to be the subalgebra obtained
by the symmetrization map applied to the generators of $(\Sym g)^G$, in other
words $\bold{Z_0}$ is the reduction modulo $p$ of the usual characteristic 0 central elements
of $\mf{U}\mf{g}$. Then $\Gr A/(\Gr \bold{Z_0})_{+}=\Sym \mf{g}/(\Sym \mf{g})^G_{+}$
is the ring of coinvariants, which is the ring of functions on the nilpotent
cone of $\mf{g}^{*}$ \cite{Ko}, which as a Poisson variety consists of finitely many
symplectic leaves. Thus, assumptions of Theorem \ref{weiwei} are satisfied, as a result
we obtain that the Azumaya locus of $\mf{U}\mf{g}$ coincides with the smooth
locus of its center, a theorem of Brown-Goodearl \cite{BC}. Note that
we did not use any modular representation theory of $\mf{g}.$

A standard example of an almost commutative algebra for which the Azumaya locus
does not coincide with the smooth locus is the enveloping algebra of the Heisenberg
Lie algebra: A Lie algebra $\mf{g}$ with a basis 
$z, x_1,\cdots, x_n, y_1,\cdots y_n$
and relations $[x_i, y_j]=\delta_{ij}z, [z, x_i]=[z, y_j]=0,$ where $\delta_{ij}$ is the Kronecker symbol.
Then, the center of $\mf{U}\mf{g}$ is the polynomial algebra 
$\bold{k}[z, x_1^p,\cdots,x_n^p,y_1^p,\cdots, y_n^p]$, but the Azumaya locus
is the set of characters which do not vanish on $z.$

  The above theorem can also applied to infinitesimal Hecke algebras of $\mf{sl}_2$ \cite{T}.
We expect many applications of the above result for other infinitesimal Hecke algebras.

\acknowledgement{I would like to thank M. Boyarchenko and especially
V. Ginzburg for useful discussions. I am also grateful to 
the anonymous referee for helpful comments. }

\end{document}